\newcommand{\N}{\mathbb{N}}
\newcommand{\R}{\mathbb{R}}
\newcommand{\cS}{\mathcal{S}}
\newcommand{\cO}{\mathcal{O}}
\newcommand{\dx}{{\rm d}x }
\newcommand{\dt}{{\rm d}t }
\newcommand{\dxi}{{\rm d}\xi }
\newcommand{\e}{\varepsilon}
\newcommand{\Vpol}{\mathcal{V}_{\operatorname{pol}}}
\newcommand{\CCS}{\operatorname{CCS}}
\newcommand{\coleq}{\mathrel{\mathop:}=}
\newcommand{\abso}[1]{\left|#1\right|}
\newcommand{\norm}[1]{\lVert#1\rVert}
\DeclareMathOperator{\id}{id}
\newtheorem{theorem}{Theorem}[section]
\newtheorem{proposition}[theorem]{Proposition}
\newtheorem{lemma}[theorem]{Lemma}
\theoremstyle{definition}
\theoremstyle{remark}
\newtheorem{remark}[theorem]{Remark}
\newtheorem{example} [theorem]{Example}
\numberwithin{equation}{section}
\begin{document}
\title{On the space of Laplace transformable distributions}

\author[A.~Debrouwere]{Andreas Debrouwere}
\address{A.~Debrouwere, Department of Mathematics: Analysis, Logic and Discrete Mathematics\\ Ghent University\\ Krijgslaan 281\\ 9000 Gent\\ Belgium}
\email{andreas.debrouwere@UGent.be}

\author[E.~A.~Nigsch]{Eduard A.\ Nigsch}
\address{E.~A.~Nigsch \\ Faculty of Mathematics \\ University of Vienna \\ Oskar-Morgenstern-Platz 1\\ 1090 Wien \\Austria}
\email{eduard.nigsch@univie.ac.at}

\subjclass[2010]{\emph{Primary.}  46F05, 46A13. \emph{Secondary} 81S30.} 
\keywords{Laplace transform; distributions; ultrabornological (PLS)-spaces; short-time Fourier transform}

\begin{abstract} 
We show that the space $\cS'(\Gamma)$ of Laplace transformable distributions, where $\Gamma \subseteq \R^d$ is a non-empty convex open set, is an ultrabornological (PLS)-space. Moreover, we determine an explicit topological predual of $\cS'(\Gamma)$.
\end{abstract}
\maketitle

\section{Introduction}

L.~Schwartz introduced the space $\cS'(\Gamma)$ of Laplace transformable distributions as
\begin{equation*}
\mathcal{S}'(\Gamma) = \{ f \in \mathcal{D}'(\R^d)\ |\ e^{-\xi \cdot x} f(x) \in \mathcal{S}'(\R^d_x)\ \forall \xi \in \Gamma \},
\end{equation*}
where $\Gamma \subseteq \R^d$ is a non-empty convex set \cite[p.~303]{SCH3}. This space is endowed with the projective limit topology with respect to the mappings $\mathcal{S}'(\Gamma) \to \mathcal{S}'(\R^d)$, $f \mapsto e^{-\xi \cdot x} f(x)$ for $\xi \in \Gamma$. The second author together with M.~Kunzinger and N.~Ortner \cite{laplace} recently presented two new proofs of Schwartz's exchange theorem for the Laplace transform of vector-valued distributions \cite[Prop.\ 4.3, p.~186]{SCH2}. Their methods required them to show that $\mathcal{S}'(\Gamma)$ is complete, nuclear and dual-nuclear \cite[Lemma 5]{laplace}. Following a suggestion of N.~Ortner, in this article, we further study the locally convex structure of the space $\mathcal{S}'(\Gamma)$. 


In order to be able to apply functional analytic tools such as De Wilde's open mapping and closed graph theorems \cite[Theorem 24.30 and Theorem 24.31]{M-V} or the theory of the derived project limit functor \cite{Wengenroth}, it is important to determine when a space is ultrabornological.  This is usually straightforward if the space is given by a suitable inductive limit; in fact, ultrabornological spaces are exactly the inductive limits of Banach spaces \cite[Proposition 24.14]{M-V}. The situation for projective limits, however, is more complicated. 
Particularly, this applies to the class  of (PLS)-spaces (i.e., countable projective limits of (DFS)-spaces). The problem of ultrabornologicity has  been extensively studied in this class, both from an abstract point of view as for concrete function and distribution spaces; see the survey article \cite{Domanski} of Doma\'nski and the references therein. 


In the last part of his doctoral thesis \cite[Chap.\ II, Thm.\ 16, p.\ 131]{G2},  A.~Grothendieck showed that  the convolutor space $\cO_C'$ is ultrabornological. He proved  that $\cO_C'$ is isomorphic to a complemented subspace of the sequence space $s \widehat{\otimes} s'$ and verified directly that the latter space is ultrabornological. Much later, a different proof was given by J.~Larcher and J.~Wengenroth using homological methods \cite{zbMATH06408733}. 
The first author and J.~Vindas \cite{D-V-2018} extended this result  to a considerably wider setting by studying the locally convex structure of  a general class of weighted convolutor spaces. More precisely, they characterized when such spaces are ultrabornological and determined explicit topological preduals for them. One of their main tools is a topological description of these convolutor spaces in terms of the short-time Fourier transform (STFT). 

In this work, we will identify $\mathcal{S}'(\Gamma)$ with a particular instance of the convolutor spaces considered in \cite{D-V-2018}. To this end, we make a detailed study of the mapping properties of the STFT on $\mathcal{S}'(\Gamma)$. Once this identification has been established, we use Theorem 1.1 from \cite{D-V-2018} (see also Theorem \ref{top-char} below) to show that $\cS'(\Gamma)$ is an ultrabornological (PLS)-space and that it admits a weighted (LF)-space of smooth functions on $\R^d$ as a topological predual.


\section{Weighted spaces of continuous functions}

For formulating the mapping properties of the STFT we recall the following notions from \cite{B-M-S} and \cite{D-V-2018}.

Each non-negative function $v$ on $\R^d$ defines a weighted seminorm on $C(\R^d)$ by
\begin{equation*}
 \norm{f}_{v} \coleq \sup_{x \in \R^d} \abso{f(x)} v(x). 
\end{equation*}
We endow the space
\[ Cv(\R^d) \coleq \{ f \in C(\R^d)\ |\ \norm{f}_{v} < \infty \} \]
with this seminorm; it is a Banach space if $v$ is positive and continuous. A pointwise decreasing sequence $\mathcal{V} = (v_N)_{N \in \N}$ of positive continuous functions on $\R^d$ is called a \emph{decreasing weight system}. With this, we define the $(LB)$-space
\[
\mathcal{V}C(\R^d)\coleq \varinjlim_{N \in \N} Cv_N(\R^d).
\]
We consider the following condition on a decreasing weight system $\mathcal{V}$, see \cite[p.\ 114]{B-M-S}:
\begin{equation}\label{condV}\tag{V}
\forall N \in \N  \, \exists M > N \, : \,  \lim_{\abso{x} \to \infty}\frac{v_M(x)}{v_N(x)} = 0.
\end{equation}
The \emph{maximal Nachbin family associated with $\mathcal{V}$} is defined to be the family $\overline{V}=\overline{V}(\mathcal{V})$ consisting of all non-negative upper semicontinuous functions $v$ on $\R^d$ such that 
\[
\forall N \in \N \, : \,\sup_{x \in \R^d} \frac{v(x)}{v_N(x)} < \infty.
\]
The \emph{projective hull of $\mathcal{V}C(\R^d)$} is defined as
\[ C\overline{V}(\R^d) \coleq \{ f \in C(\R^d)\ |\ \norm{f}_{v} < \infty\ \forall v \in \overline{V} \}. \]
and endowed with the locally convex topology generated by the system of seminorms $\{ \norm{\,\cdot \,}_{v} \, | \, v \in \overline{V} \}$. The spaces $\mathcal{V}C(\R^d)$ and $C\overline{V}(\R^d)$ always coincide as sets and, if  $\mathcal{V}$ satisfies condition \eqref{condV}, also as locally convex spaces \cite[Thm.\ 1.3 (d), p.~118]{B-M-S}.

A pointwise increasing sequence $\mathcal{W} = (w_N)_{N \in \N}$ of  positive continuous functions on $\R^d$ is called an \emph{increasing weight system}. Given such a system, we define the Fr\'echet space
\[
\mathcal{W}C(\R^d)\coleq \varprojlim_{N \in \N} Cw_N(\R^d).
\]
We consider the following conditions on an increasing weight system $\mathcal{W}$:
\begin{gather}
\forall N \in \N  \, \exists M > N \, : \,  \lim_{\abso{x} \to \infty}\frac{w_N(x)}{w_M(x)} = 0, \label{V} \\
\forall N \in \N  \, \exists M > N : \frac{w_N}{w_M} \in L^1(\R^d), \label{L1-cond} \\
\forall N  \in \N  \, \exists M_1,M_2 \geq N \, \exists C > 0 \, \forall x,y \in \R^d : w_N(x+y) \leq C w_{M_1}(x) w_{M_2}(y). \label{trans-inv}
\end{gather}

In the next lemma,  we obtain a concrete representation of the $\e$-tensor product of weighted spaces of continuous functions.\begin{lemma}\label{proj-desc}
Let $\mathcal{W} = (w_N)_{N \in \N}$ be an increasing weight system and $\mathcal{V} = (v_n)_{n \in \N}$ a decreasing weight system satisfying $(V)$. Then, we have the identification
 \[ \mathcal{W}C(\R^d_x) \widehat{\otimes}_\varepsilon \mathcal{V}C(\R^d_\xi) = \{ f \in C(\R^{2d}_{x,\xi})\ |\ \forall N  \in \N\ \exists n \in \N: \norm{f}_{w_N, v_n}  < \infty \}, \]
where we set $\norm{f}_{w \otimes v} \coleq \sup_{(x,\xi) \in \R^{2d}} \abso{f(x,\xi)}w(x) v(\xi)$ for non-negative functions $w,v$ on $\R^d$. Moreover, $f \in C(\R^{2d}_{x,\xi})$  belongs to $\mathcal{W}C(\R^d_x) \widehat{\otimes}_\varepsilon \mathcal{V}C(\R^d_\xi)$ if and only if $\norm{f}_{w_N \otimes v} < \infty$ for all $N \in \N$ and $v \in \overline{V}$.
Consequently, the topology of $\mathcal{W}C(\R^d_x) \widehat{\otimes}_\varepsilon \mathcal{V}C(\R^d_\xi)$ is generated by the system of seminorms $\{ \norm{\, \cdot \, }_{w_N \otimes v} \, | \, N \in \N, v \in \overline{V} \}$. 
\end{lemma}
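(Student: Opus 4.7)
My plan is to prove the lemma in three steps: (1) identify the $\varepsilon$-seminorms on the algebraic tensor product with the weighted sup-seminorms $\|\cdot\|_{w_N\otimes v}$; (2) verify the equivalence of the two set-theoretic descriptions; (3) identify the completion of the algebraic tensor product with the target space. I expect step (3) to be the main obstacle.

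\emph{Step 1.} A fundamental system of continuous seminorms on $\mathcal{W}C(\R^d_x)$ is $\{\|\cdot\|_{w_N}:N\in\N\}$, and, using the identity $\mathcal{V}C=C\overline{V}$ from \cite[Thm.\ 1.3(d)]{B-M-S} (valid under $(V)$), a fundamental system on $\mathcal{V}C(\R^d_\xi)$ is $\{\|\cdot\|_v:v\in\overline{V}\}$. The corresponding $\varepsilon$-seminorm on the algebraic tensor product, namely $\varepsilon_{w_N,v}(u)=\sup\{|\langle\mu\otimes\nu,u\rangle|:\mu\in U_{w_N}^\circ,\nu\in U_v^\circ\}$, is computed by testing against the scaled point masses $w_N(x)\delta_x$ and $v(\xi)\delta_\xi$, which lie in the respective unit polars by Riesz representation of the duals as weighted Radon measures; this gives $\varepsilon_{w_N,v}(u)\geq\|u\|_{w_N\otimes v}$. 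The reverse inequality follows from a Fubini-type estimate against finite positive measures. Hence $\varepsilon_{w_N,v}=\|\cdot\|_{w_N\otimes v}$ on $\mathcal{W}C\otimes\mathcal{V}C$, which settles the topology statement of the lemma.

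\emph{Step 2.} The implication $(\exists n)\Rightarrow(\forall v\in\overline{V})$ is immediate from the defining bound $\sup_\xi v(\xi)/v_n(\xi)<\infty$ for $v\in\overline{V}$. For the converse, fix $N$ and set $g_N(\xi):=\sup_x|f(x,\xi)|w_N(x)$. The hypothesis is precisely $\sup_\xi g_N(\xi)v(\xi)<\infty$ for every $v\in\overline{V}$; replacing $g_N$ by its upper semicontinuous envelope (which inherits the same bounds) and applying $\mathcal{V}C=C\overline{V}$ once more yields the required $n$ with $\sup_\xi g_N(\xi)v_n(\xi)<\infty$, i.e.\ $\|f\|_{w_N,v_n}<\infty$.

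\emph{Step 3.} The target space $Y:=\{f\in C(\R^{2d}):\|f\|_{w_N\otimes v}<\infty\ \forall N\in\N,\forall v\in\overline{V}\}$, equipped with the seminorms $\|\cdot\|_{w_N\otimes v}$, is complete: Cauchy nets converge pointwise (and uniformly on compact subsets of $\R^{2d}$ by continuity and positivity of the weights), the limits are continuous, and the seminorms are lower semicontinuous under pointwise limits. This gives a continuous injection $\mathcal{W}C\widehat{\otimes}_\varepsilon\mathcal{V}C\hookrightarrow Y$. The main obstacle is surjectivity, i.e.\ density of $\mathcal{W}C\otimes\mathcal{V}C$ in $Y$. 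My plan is to invoke the approximation property: each $Cw_N$ is isomorphic via weight multiplication to $C_b(\R^d)=C(\beta\R^d)$, and analogously for $Cv_n$, so each Banach factor has bounded AP, and standard permanence results propagate AP to $\mathcal{W}C$ and $\mathcal{V}C$. This yields the identification $\mathcal{W}C\widehat{\otimes}_\varepsilon\mathcal{V}C=\mathcal{W}C\,\varepsilon\,\mathcal{V}C=L_e(\mathcal{V}C'_c,\mathcal{W}C)$ with the Schwartz $\varepsilon$-product, which one then identifies with $Y$ via the kernel correspondence $T\mapsto((x,\xi)\mapsto(T\delta_\xi)(x))$; the technical heart of the argument is showing that every $f\in Y$ produces a $c$-continuous operator $T_f(\mu)(x):=\int f(x,\xi)\,d\mu(\xi)$, where condition $(V)$ on $\mathcal{V}$ is essential to control the sections $\{f(\cdot,\xi)\}_\xi$ uniformly.
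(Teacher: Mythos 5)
Your Steps 1 and 2 are essentially sound, but Step 3 — which you correctly identify as the main obstacle — contains the genuine gap: the surjectivity of $\mathcal{W}C(\R^d_x)\widehat{\otimes}_\varepsilon \mathcal{V}C(\R^d_\xi)\hookrightarrow Y$ is the entire content of the lemma, and your plan for it rests on two unproved claims. First, ``standard permanence results propagate AP to $\mathcal{W}C$ and $\mathcal{V}C$'' is not a standard permanence statement: the approximation property is not in general inherited by countable projective limits of Banach spaces, let alone by (LB)-spaces from their steps; for weighted spaces $CV(X)$ this is a genuine theorem of Bierstedt, not a formal consequence. Second, even granting AP, the identification of the $\varepsilon$-product $\mathcal{W}C\,\varepsilon\,\mathcal{V}C$ with $Y$ via the kernel correspondence — in particular showing that every $f\in Y$ induces a continuous operator and that the weighted sup-seminorms are recovered — is precisely the content of \cite[Thm.\ 3.1]{B-M-S}, and you only describe it as a plan. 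The paper avoids all of this by a two-line argument: since the $\varepsilon$-tensor product commutes with reduced projective limits, $\mathcal{W}C\widehat{\otimes}_\varepsilon\mathcal{V}C=\varprojlim_N\bigl(Cw_N(\R^d)\widehat{\otimes}_\varepsilon\mathcal{V}C(\R^d)\bigr)$, and the projective description of each step $Cw_N\widehat{\otimes}_\varepsilon\mathcal{V}C$ is exactly \cite[Thm.\ 3.1 (c)]{B-M-S}. If you intend a self-contained proof rather than a citation, you must actually carry out the density argument (e.g.\ by an explicit approximation of $f\in Y$ by finite sums of products, exploiting condition \eqref{condV}), not merely name the tools.

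A smaller but real flaw in Step 2: the function $g_N(\xi)=\sup_x|f(x,\xi)|w_N(x)$ is lower semicontinuous, and its upper semicontinuous envelope does \emph{not} in general inherit the bounds $\sup_\xi g_N(\xi)v(\xi)<\infty$, because the weights $v\in\overline{V}$ are themselves only upper semicontinuous ($v$ may jump up exactly where $g_N$ jumps down). The correct repair is to observe that the combinatorial construction behind the algebraic identity $\mathcal{V}C=C\overline{V}$ (building $\bar v=\inf_n\lambda_n v_n\in\overline{V}$ from points where $g_Nv_n$ is large) works for an \emph{arbitrary} non-negative function $g_N$ and needs no regularization at all; as written, your envelope step would fail.
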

\begin{proof}
This follows from the fact that the $\varepsilon$-tensor product commutes with projective limits and  \cite[Thm.\ 3.1 (c), p.~137]{B-M-S}.
\end{proof}

\section{The short-time Fourier transform on $\mathcal{D}'(\R^d)$}
The translation and modulation operators are denoted by $T_xf(t) = f(t-x)$ and $M_\xi f(t) = e^{2\pi i \xi \cdot t} f(t)$ for $x, \xi \in \R^d$. The \emph{short-time Fourier transform (STFT)} of a function $f \in L^2(\R^d)$ with respect to a window function $\psi \in L^2(\R^d)$ is defined as
\[
V_\psi f(x,\xi) \coleq (f, M_\xi T_x\psi)_{L^2} = \int_{\R^d} f(t) \overline{\psi(t-x)}e^{-2\pi i \xi \cdot t}\, \dt, \qquad (x, \xi) \in \R^{2d},
\]
where $(\cdot,\cdot)_{L^2}$ denotes the inner product on $L^2(\R^d)$. We have that $\norm{V_\psi f}_{L^2(\R^{2d})} = \norm{\psi}_{L^2}\norm{f}_{L^2}$. In particular, the mapping $V_\psi \colon L^2(\R^d) \rightarrow L^2(\R^{2d})$ is continuous. The adjoint of $V_\psi$ is given by the weak integral
\[
V^\ast_\psi F = \int \int_{\R^{2d}} F(x,\xi) M_\xi T_x\psi\, \dx\, \dxi, \qquad F \in L^2(\R^{2d}).
\]
If $\psi \neq 0$ and $\gamma \in L^2(\R^d)$ is a synthesis window for $\psi$, that is, $(\gamma, \psi)_{L^2} \neq 0$, then
\[
\frac{1}{(\gamma, \psi)_{L^2}} V^\ast_\gamma \circ V_\psi = \id_{L^2(\R^d)}.
\]
We refer to \cite{Grochenig} for further properties of the STFT. 

Next, we explain how the STFT can be extended to the space of distributions; see \cite[Sect.\ 2]{D-V-2018} for details and proofs. We set
$\mathcal{V}_{\operatorname{pol}} = ((1+ \abso{\, \cdot \, })^{-N})_{N \in \N}.$
Fix a window function $\psi \in \mathcal{D}(\R^d)$. For $f \in \mathcal{D}'(\R^d)$ we define
\[
V_\psi f(x,\xi) \coleq  \langle f, \overline{M_\xi T_x\psi} \rangle, \qquad (x,\xi) \in \R^{2d}.
\]
Clearly, $V_\psi f$ is a continuous function on $\R^{2d}$. In fact,
\[
V_\psi \colon \mathcal{D}'(\R^d) \rightarrow C(\R^d_x) \widehat{\otimes}_\varepsilon \Vpol C(\R^d_\xi)
\]
is a well-defined continuous mapping \cite[Lemma 2.2]{D-V-2018}. We \emph{define} the adjoint STFT of an element $F \in C(\R^d_x) \widehat{\otimes}_\varepsilon \Vpol C(\R^d_\xi)$ as the distribution
\[
\langle V^\ast_\psi F, \varphi \rangle \coleq \int \int_{\R^{2d}} F(x,\xi) V_{\overline{\psi}}\varphi(x, -\xi)\,\dx\,\dxi, \qquad \varphi \in \mathcal{D}(\R^d). 
\]
Then,
\[
V^\ast_\psi \colon C(\R^d_x) \widehat{\otimes}_\varepsilon \Vpol C(\R^d_\xi) \rightarrow \mathcal{D}'(\R^d)
\]
is a well-defined continuous mapping by \cite[Prop.\ 2.2]{D-V-2018}. Finally, if $\psi \neq 0$ and $\gamma \in \mathcal{D}(\R^d)$  is a synthesis window for $\psi$, then the following reconstruction formula holds \cite[Prop.\ 2.4]{D-V-2018}:
\begin{equation}
\frac{1}{(\gamma, \psi)_{L^2}} V^\ast_\gamma \circ V_\psi = \operatorname{id}_{\mathcal{D}'(\R^d)}.
\label{reconstruction-D-dual}
\end{equation}
\section{Duals of inductive limits of weighted spaces of smooth functions}\label{prev-paper}
Let $v$ be a non-negative function on $\R^d$ and $n \in \N$. We define $\mathcal{B}^n_v(\R^d)$ as the seminormed space consisting of all $\varphi \in C^n(\R^d)$ such that
\[
\norm{\varphi}_{v,n} \coleq \max_{\abso{\alpha} \leq n} \sup_{x \in \R^d} \abso{\partial^{\alpha}\varphi(x)}v(x) < \infty. 
\]
As before, $\mathcal{B}^n_v(\R^d)$ is a Banach space if  $v$ is positive and continuous. Let $\mathcal{W} = (w_N)_{N \in \N}$ be an increasing weight system. We define the (LF)-space
\[
\mathcal{B}_{\mathcal{W}^\circ}(\R^d) := \varinjlim_{N \in \N} \varprojlim_{n \in \N} \mathcal{B}^n_{1/w_N}(\R^d).
\]
We endow the dual space $\mathcal{B}'_{\mathcal{W}}(\R^d) \coleq (\mathcal{B}_{\mathcal{W}^\circ}(\R^d))'$ with the strong topology. If $\mathcal{W}$ satisfies \eqref{V}, then $\mathcal{D}(\R)$ is densely and continuously included in $\mathcal{B}_{\mathcal{W}^\circ}(\R^d)$ and therefore  $\mathcal{B}'_{\mathcal{W}}(\R^d)$ is a vector subspace of $\mathcal{D}'(\R^d)$. 

On the other hand, we define the convolutor space
\[ \mathcal{O}'_{C,\mathcal{W}}(\R^d) \coleq \{ f \in \mathcal{D}'(\R^d)\ |\ f \ast \varphi \in\mathcal{W}C(\R^d)\ \forall \varphi \in \mathcal{D}(\R^d) \}. \]
For $f \in \mathcal{O}'_{C,\mathcal{W}}(\R^d)$ fixed, the mapping 
\[
\mathcal{D}(\R^d) \rightarrow {\mathcal{W}}C(\R^d),\quad \varphi \mapsto f \ast \varphi
\]
 is continuous, as follows from the closed graph theorem. We endow $\mathcal{O}'_{C,\mathcal{W}}(\R^d)$ with the topology induced via the embedding
\[
\mathcal{O}'_{C,\mathcal{W}}(\R^d) \rightarrow L_\beta(\mathcal{D}(\R^d), \mathcal{W}C(\R^d)),\quad  f \mapsto [\varphi \mapsto f \ast \varphi],
\]
where $\beta$ denotes the topology of uniform convergence on bounded sets.

In \cite{D-V-2018} the structural and topological properties of the spaces $\mathcal{B}'_{\mathcal{W}}(\R^d)$ and $\mathcal{O}'_{C,\mathcal{W}}(\R^d)$ are discussed. We now present the main results of this paper and refer to \cite{D-V-2018} for more details and proofs\footnote{To be precise, the spaces considered in \cite{D-V-2018}, denoted there by $(\dot{\mathcal{B}}_{\mathcal{W}^\circ} (\R^d))'$ and $\mathcal{O}'_C(\mathcal{D}, L^1_{\mathcal{W}})$, differ from  $\mathcal{B}'_{\mathcal{W}}(\R^d)$ and $
\mathcal{O}'_{C,\mathcal{W}}(\R^d)$ defined above. However, if  $\mathcal{W}$ satisfies \eqref{V}, \eqref{L1-cond} and \eqref{trans-inv}, then $\mathcal{B}'_{\mathcal{W}}(\R^d) = (\dot{\mathcal{B}}_{\mathcal{W}^\circ} (\R^d))'$ and $\mathcal{O}'_C(\mathcal{D}, 
L^1_{\mathcal{W}}) = \mathcal{O}'_{C,\mathcal{W}}(\R^d)$; the first equality is clear, while the second one follows from \cite[Prop.\ 6.2]{D-V-2018}. Moreover, under these conditions, all statements and proofs from \cite{D-V-2018} remain valid if one replaces  $L^1_{\mathcal{W}}(\R^d)$ 
by  $\mathcal{W}C(\R^d)$.}.

\begin{proposition}  \cite[Prop.\ 4.2]{D-V-2018}\label{STFT-char-1}
Let $\mathcal{W}$ be an increasing weight system satisfying \eqref{V}, \eqref{L1-cond} and \eqref{trans-inv} and let $\psi \in \mathcal{D}(\R^d)$.  Then, the mappings
\[
V_\psi\colon \mathcal{O}'_{C,\mathcal{W}}(\R^d) \rightarrow \mathcal{W}C(\R^d_x) \widehat{\otimes}_\varepsilon \Vpol C(\R^d_\xi)
\]
and
\[
V^*_\psi\colon  \mathcal{W}C(\R^d_x) \widehat{\otimes}_\varepsilon \Vpol C(\R^d_\xi) \rightarrow  \mathcal{O}'_{C,\mathcal{W}}(\R^d)
\]
are well-defined and continuous.
\end{proposition}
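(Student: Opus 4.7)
My plan is to reduce both assertions to explicit computations via the identity
\[
V_\psi f(x,\xi) = e^{-2\pi i \xi \cdot x}(f \ast M_\xi \psi^\ast)(x), \qquad \psi^\ast(t) \coleq \overline{\psi(-t)},
\]
which follows at once from unwinding the definitions of the STFT and the convolution. For $V_\psi$, Lemma \ref{proj-desc} says the topology on the target is generated by the seminorms $\norm{\cdot}_{w_N \otimes v}$ with $N \in \N$ and $v \in \overline{V}(\Vpol)$. The crucial observation is that any $v \in \overline{V}(\Vpol)$ decays faster than every polynomial, so the family
\[
B_v \coleq \{\, v(\xi) M_\xi \psi^\ast \, | \, \xi \in \R^d \,\}
\]
is bounded in $\mathcal{D}(\R^d)$: its members are all supported in $-\operatorname{supp} \psi$, and $\abso{\partial^\alpha(v(\xi) M_\xi \psi^\ast)(u)}$ is dominated by $C_\alpha v(\xi)(1+\abso{\xi})^{\abso{\alpha}}$, which is bounded uniformly in $\xi$. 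The identity above then yields
\[
\norm{V_\psi f}_{w_N \otimes v} = \sup_\xi v(\xi)\norm{f \ast M_\xi \psi^\ast}_{w_N} = \sup_{\varphi \in B_v}\norm{f \ast \varphi}_{w_N},
\]
which is a continuous seminorm on $\mathcal{O}'_{C,\mathcal{W}}(\R^d)$ by definition of its topology. This delivers well-definedness and continuity of $V_\psi$ in one stroke.

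For $V^\ast_\psi$, a short computation using $M_\xi T_x \psi \ast \varphi = e^{2\pi i \xi \cdot x} T_x(M_\xi \psi \ast \varphi)$ gives, for $\varphi \in \mathcal{D}(\R^d)$,
\[
(V^\ast_\psi F \ast \varphi)(s) = \int\int_{\R^{2d}} F(x,\xi)\, e^{2\pi i \xi \cdot x}\, (M_\xi \psi \ast \varphi)(s-x)\, \dx\, \dxi.
\]
Set $h_\xi \coleq M_\xi \psi \ast \varphi$. This function is supported in the fixed compact set $\operatorname{supp} \psi + \operatorname{supp} \varphi$, and repeated integration by parts (viewing $h_\xi(u)$ as a partial Fourier transform of the smooth compactly supported function $t \mapsto \psi(t)\varphi(u-t)$) yields $\norm{h_\xi}_{L^\infty} \leq C_m(1+\abso{\xi})^{-m}$ for every $m \in \N$, with $C_m$ controlled by $\norm{\psi}_{C^m}$, $\norm{\varphi}_{C^m}$ and $\abso{\operatorname{supp}\psi}$. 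To bound $\norm{V^\ast_\psi F \ast \varphi}_{w_N}$, apply \eqref{trans-inv} to split $w_N(s) \leq C w_{M_1}(s-x) w_{M_2}(x)$; the compact support of $h_\xi$ in the variable $s-x$ makes $w_{M_1}(s-x)$ uniformly bounded on the domain of integration. Lemma \ref{proj-desc} provides $n$ with $\abso{F(x,\xi)} \leq \norm{F}_{w_{N'}\otimes v_n}(1+\abso{\xi})^n/w_{N'}(x)$; choose $N' > M_2$ via \eqref{L1-cond} so that $w_{M_2}/w_{N'} \in L^1(\R^d)$, and pick $m > n+d$ so that $(1+\abso{\xi})^{n-m}$ is integrable. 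The double integral then converges and is bounded by a constant multiple of $\norm{F}_{w_{N'}\otimes v_n}$, the $\varphi$-dependence sitting entirely in $\operatorname{supp}\varphi$ and $\norm{\varphi}_{C^m}$. Both are uniformly controlled when $\varphi$ ranges over a bounded subset of $\mathcal{D}(\R^d)$, so this yields $V^\ast_\psi F \in \mathcal{O}'_{C,\mathcal{W}}(\R^d)$ together with the continuity of $V^\ast_\psi$.

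The main obstacle is the bookkeeping of weight indices in the second step: one must choose $M_1, M_2$ from \eqref{trans-inv}, then $N'$ from \eqref{L1-cond}, accept the index $n$ supplied by the given $F$, and finally pick $m$ large. Each choice is forced by the previous, and the argument closes only because the three conditions on $\mathcal{W}$ are compatible in precisely this nested order. The first step is, by contrast, essentially a one-line identification once one notices that $\overline{V}(\Vpol)$ consists of rapidly decaying weights, which turns the seminorm $\norm{\cdot}_{w_N \otimes v}$ into an explicit supremum over a bounded subset of $\mathcal{D}(\R^d)$.
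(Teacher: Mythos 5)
The paper does not prove this proposition---it is quoted verbatim from \cite[Prop.~4.2]{D-V-2018}---so your argument can only be judged on its own merits. Your treatment of $V_\psi$ is correct and is exactly the natural argument: the identity $V_\psi f(x,\xi)=e^{-2\pi i\xi\cdot x}(f\ast M_\xi\psi^\ast)(x)$ checks out, $B_v$ is bounded in $\mathcal{D}(\R^d)$ precisely because every $v\in\overline{V}(\Vpol)$ decays faster than all polynomials, and $\sup_{\varphi\in B_v}\norm{f\ast\varphi}_{w_N}$ is by definition a continuous seminorm on $\mathcal{O}'_{C,\mathcal{W}}(\R^d)$; combined with the seminorm description in Lemma \ref{proj-desc} this gives both well-definedness and continuity.

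The $V^\ast_\psi$ half has a genuine gap in the continuity step. Your final estimate reads $\sup_{\varphi\in B}\norm{V^\ast_\psi F\ast\varphi}_{w_N}\leq C\,\norm{F}_{w_{N'}\otimes v_n}$ where $n$ is ``the index supplied by the given $F$'' through the first characterization in Lemma \ref{proj-desc}. But for a fixed $n$ the functional $\norm{\cdot}_{w_{N'}\otimes v_n}$ with $v_n=(1+\abso{\cdot})^{-n}$ is \emph{not} a continuous seminorm on $\mathcal{W}C(\R^d_x)\widehat{\otimes}_\varepsilon\Vpol C(\R^d_\xi)$---it is not even finite on all of that space, and $(1+\abso{\cdot})^{-n}\notin\overline{V}(\Vpol)$. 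A bound by a seminorm whose index varies with $F$ establishes that $V^\ast_\psi F\ast\varphi\in\mathcal{W}C(\R^d)$ (well-definedness), but it does not establish continuity, and the source space is not known a priori to be bornological, so you cannot appeal to boundedness either. The fix uses ingredients you already have: your integration by parts shows that $g_B(\xi)\coleq\sup_{\varphi\in B}\sup_u\abso{(M_\xi\psi\ast\varphi)(u)}$ decays faster than every polynomial, so $v(\xi)\coleq(1+\abso{\xi})^{d+1}g_B(\xi)$ is dominated by an element of $\overline{V}(\Vpol)$; estimating $\abso{F(x,\xi)}\leq\norm{F}_{w_{N'}\otimes v}/(w_{N'}(x)v(\xi))$ and $\abso{h_\xi(s-x)}\leq v(\xi)(1+\abso{\xi})^{-(d+1)}$ then closes the integral with the single continuous seminorm $\norm{F}_{w_{N'}\otimes v}$ on the right. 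This is exactly how the authors handle the analogous point in their proof of Proposition \ref{STFT-char}, where Lemma \ref{lemma-2} is used to manufacture $w=v\cdot(1+\abso{\cdot})^{d+1}\in\overline{V}(\Vpol)$ before the final bound $p_{K,B}(V^\ast_\psi F)\leq C\norm{F}_{K_\varepsilon,w}$.
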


\begin{theorem}  \cite[Thm.\ 3.4, Thm.\ 4.6 and Thm.\ 4.15]{D-V-2018} \label{top-char}
Let $\mathcal{W} = (w_N)_{N \in \N}$ be an increasing weight system satisfying \eqref{V}, \eqref{L1-cond} and \eqref{trans-inv}. Then, $\mathcal{B}'_{\mathcal{W}}(\R^d) = \mathcal{O}'_{C,\mathcal{W}}(\R^d)$ as sets and the inclusion mapping $\mathcal{B}'_{\mathcal{W}}(\R^d) \rightarrow \mathcal{O}'_{C,\mathcal{W}}(\R^d)$ is continuous. Moreover, the following statements are equivalent:
\begin{itemize}
\item[$(i)$] $\mathcal{B}'_{\mathcal{W}}(\R^d) = \mathcal{O}'_{C,\mathcal{W}}(\R^d)$ as locally convex spaces.
\item[$(ii)$] $\mathcal{O}'_{C,\mathcal{W}}(\R^d)$ is an ultrabornological (PLS)-space.
\item[$(iii)$] The (LF)-space $\mathcal{B}_{\mathcal{W}^\circ}(\R^d)$ is complete.
\item[$(iv)$] $\mathcal{W}$ satisfies
\begin{gather}
 \label{Omega-switched}
\forall N \in \N  \, \exists M \geq N \, \forall P \geq M \, \exists \theta \in (0,1) \, \exists C > 0 \, \forall x \in \R^d:  \\ 
{w_N(x)}^{1-\theta}{w_P(x)}^{\theta} \leq Cw_M(x). \nonumber
\end{gather}
\end{itemize}
\end{theorem}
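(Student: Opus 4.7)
My plan is to exploit the STFT from Proposition \ref{STFT-char-1} to realize $\mathcal{O}'_{C,\mathcal{W}}(\R^d)$ as a topologically complemented subspace of the (PLS)-space $\mathcal{W}C(\R^d_x) \widehat{\otimes}_\varepsilon \Vpol C(\R^d_\xi)$, and then to reduce each of the four conditions to properties of the weight system via Köthe-type sequence space models. Concretely, for $\psi \in \mathcal{D}(\R^d)$ and a synthesis window $\gamma \in \mathcal{D}(\R^d)$, the reconstruction formula \eqref{reconstruction-D-dual} combined with Proposition \ref{STFT-char-1} shows that $V_\psi$ is a topological embedding with $(\gamma, \psi)_{L^2}^{-1} V^*_\gamma$ as a continuous left inverse, so $\mathcal{O}'_{C,\mathcal{W}}(\R^d)$ is automatically a (PLS)-space whose seminorms are given explicitly via Lemma \ref{proj-desc}.

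For the set-theoretic identity $\mathcal{B}'_{\mathcal{W}}(\R^d) = \mathcal{O}'_{C,\mathcal{W}}(\R^d)$ together with continuity of the inclusion, I would argue directly on convolutions. For $f \in \mathcal{B}'_{\mathcal{W}}$ and $\varphi \in \mathcal{D}$, the identity $(f * \varphi)(x) = \langle f, T_x \check{\varphi} \rangle$ together with \eqref{trans-inv} bounds the translates $T_x \check{\varphi}$ in $\mathcal{B}_{\mathcal{W}^\circ}$ with a $w_N$-type weight, giving $f * \varphi \in \mathcal{W}C$ with continuous dependence on $\varphi$; since $\mathcal{D}$ is dense in $\mathcal{B}_{\mathcal{W}^\circ}$, this yields a continuous inclusion. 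Conversely, any $f \in \mathcal{O}'_{C,\mathcal{W}}$ extends to a continuous functional on $\mathcal{B}_{\mathcal{W}^\circ}$ by testing against suitable compactly supported mollifications, where \eqref{L1-cond} ensures that these approximations converge in the $\mathcal{B}_{\mathcal{W}^\circ}$-topology.

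For the equivalences I would set up a parallel STFT description of $\mathcal{B}_{\mathcal{W}^\circ}(\R^d)$ as a complemented subspace of a Köthe-type (LF)-space whose weight matrix is built from $(w_N)$. Under this dictionary, condition \eqref{Omega-switched} is exactly the weighted analogue of Vogt's classical $(\Omega)$ condition. The equivalence $(\mathrm{iii}) \Leftrightarrow (\mathrm{iv})$ then follows from the Palamodov/Vogt/Wengenroth characterization of completeness for weighted (LF)-spaces of this Köthe type. The equivalence $(\mathrm{i}) \Leftrightarrow (\mathrm{iii})$ arises because completeness of the (LF)-space is equivalent, via duality and the already-proved continuous inclusion $\mathcal{B}'_{\mathcal{W}} \hookrightarrow \mathcal{O}'_{C,\mathcal{W}}$, to the strong dual topology agreeing with the $\mathcal{O}'_{C,\mathcal{W}}$-topology transported from the STFT side. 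Finally $(\mathrm{i}) \Leftrightarrow (\mathrm{ii})$ follows from the general principle that the strong dual of a complete (LF)-space of Fréchet--Schwartz type is an ultrabornological (PLS)-space, combined with the fact that a complemented subspace of an ultrabornological space is ultrabornological.

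\textbf{Main obstacle.} The hardest piece is the Vogt-type reduction $(\mathrm{iii}) \Leftrightarrow (\mathrm{iv})$: although \eqref{Omega-switched} is manifestly a combinatorial condition on the weights, deducing the functional-analytic completeness of $\mathcal{B}_{\mathcal{W}^\circ}$ requires carefully transporting the (LF)-structure through the STFT to a Köthe sequence space model and checking that the hypotheses \eqref{V}, \eqref{L1-cond}, and \eqref{trans-inv} together suffice to invoke the abstract splitting/regularity theorems (essentially of Palamodov--Retakh--Vogt). Once this reduction is in place, the remaining implications are classical duality consequences or follow from general (PLS)-space theory as in \cite{Domanski}.
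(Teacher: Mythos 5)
The first thing to note is that the paper does not prove this statement at all: Theorem \ref{top-char} is imported verbatim from \cite[Thm.\ 3.4, Thm.\ 4.6 and Thm.\ 4.15]{D-V-2018}, and the authors explicitly defer to that reference for the proofs. So there is no in-paper argument to compare yours against; what can be said is that your outline does echo the general philosophy of the cited work (STFT desingularization, a weighted (LF)-space predual, and a Vogt-type condition on the weights governing completeness). But as a standalone proof it has real gaps, and at least one step is concretely wrong.

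The wrong step: you claim that because $V_\psi$ embeds $\mathcal{O}'_{C,\mathcal{W}}(\R^d)$ as a complemented subspace of $\mathcal{W}C(\R^d_x) \widehat{\otimes}_\varepsilon \Vpol C(\R^d_\xi)$, the former is ``automatically a (PLS)-space''. The ambient space here is not a (PLS)-space: the steps of the (LB)-space $\Vpol C(\R^d)$ are weighted sup-norm spaces of \emph{continuous} functions, and the linking maps $Cv_N(\R^d) \to Cv_M(\R^d)$ are never compact (a bounded sequence such as $x \mapsto \sin(jx)\chi(x)$ has no convergent subsequence in any $Cv_M$), so $\Vpol C(\R^d)$ is not (DFS) and $Cw_N(\R^d)\widehat{\otimes}_\varepsilon \Vpol C(\R^d)$ is not (DFS) either. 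Being complemented in a non-(PLS) space proves nothing; indeed, if the (PLS)-property were automatic, condition $(ii)$ of the theorem would be vacuously half-true, whereas the whole point is that $(ii)$ is equivalent to the nontrivial conditions $(iii)$ and $(iv)$. Beyond this, the core of the theorem --- the four-way equivalence, which in \cite{D-V-2018} occupies three separate theorems and rests on the derived projective limit functor $\operatorname{Proj}^1$, regularity/completeness theory for weighted (LF)-spaces, and duality between $(\Omega)$-type and $(\mathrm{DN})$-type conditions --- is only named in your sketch, not carried out; phrases like ``follows from the Palamodov/Vogt/Wengenroth characterization'' and ``classical duality consequences'' stand in for precisely the arguments that make this result a theorem. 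Within the present paper the correct move is simply to cite \cite{D-V-2018}, as the authors do; if you want to reprove it, the $(iii)\Leftrightarrow(iv)$ and $(i)\Leftrightarrow(iii)$ reductions each need a full argument, and the (PLS)-structure has to be extracted from the predual $\mathcal{B}_{\mathcal{W}^\circ}(\R^d)$ rather than from the tensor product.
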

\begin{remark} Condition \eqref{Omega-switched} is closely connected with Vogt's condition $(\Omega)$ that plays an essential role in the structure and splitting theory for Fr\'echet spaces.
\end{remark}

\section{The space $\mathcal{S}'(\Gamma)$}

Our next goal is to characterize $\mathcal{S}'(\Gamma)$ in terms of the STFT.  

Let $\emptyset \neq \Gamma \subseteq \R^d$ be open and convex. We denote by $\CCS(\Gamma)$ the family of all non-empty compact convex subsets of $\Gamma$ and by $\mathfrak{B}(\mathcal{S}(\R^d))$ the family of all bounded subsets of $\mathcal{S}(\R^d$). The topology of $\mathcal{S}'(\Gamma)$ can also be described as follows.

\begin{lemma} \cite[p.\ 301]{SCH3} Let $\emptyset \neq \Gamma \subseteq \R^d$ be open and convex.  For all $K \in \CCS(\Gamma)$ and $B \in \mathfrak{B}(\mathcal{S}(\R^d))$ we have that
\[
p_{K,B}(f) \coleq \sup_{\eta \in K} \sup_{\varphi \in B} \abso{\langle e^{-\eta \cdot x}f(x), \varphi(x) \rangle} < \infty, \qquad f \in \mathcal{S}'(\Gamma).
\]
Moreover, the topology of $\mathcal{S}'(\Gamma)$ is generated by the system of seminorms $\{p_{K,B} \, | \,  K \in \CCS(\Gamma), B \in \mathfrak{B}(\mathcal{S}(\R^d))\}$.
\end{lemma}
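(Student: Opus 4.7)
The proof proceeds by comparing the seminorms $p_{K,B}$ with the defining seminorms of the projective limit topology on $\mathcal{S}'(\Gamma)$. The latter are $q_{\xi,B}(f) \coleq \sup_{\varphi \in B} \abso{\langle e^{-\xi \cdot x} f, \varphi \rangle}$ for $\xi \in \Gamma$ and $B \in \mathfrak{B}(\mathcal{S}(\R^d))$. Since $q_{\xi,B} = p_{\{\xi\},B}$, the system $\{p_{K,B}\}$ is at least as fine as the original one. Both the finiteness of $p_{K,B}(f)$ and the converse topological comparison will follow from a single estimate of the form $p_{K,B}(f) \leq \sum_{i=1}^m q_{\xi_i, B_i}(f)$, in which the points $\xi_i \in \Gamma$ and the bounded sets $B_i \subseteq \mathcal{S}(\R^d)$ depend only on $K$ and $B$.

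The key step is to construct a partition of unity on $\R^d$ adapted to the position of $K$ inside $\Gamma$. Since $K$ is compact and $\Gamma$ open and convex, one may choose finitely many $\xi_1, \dots, \xi_m \in \Gamma$ with $K$ contained in the interior of their convex hull. Introduce the closed convex cones $A_i \coleq \{ x \in \R^d : \xi_i \cdot x \leq \xi_j \cdot x\ \forall j \}$, which cover $\R^d$, and their unit thickenings $A_i^+ \coleq A_i + B(0,1)$. On $A_i^+$ one has $\xi_j \cdot x \geq \xi_i \cdot x - M$ with $M \coleq \max_{k,l} \abso{\xi_k - \xi_l}$, and hence, for every $\eta = \sum_j \lambda_j \xi_j \in K$,
\[
e^{-\eta \cdot x} = \prod_{j} \bigl(e^{-\xi_j \cdot x}\bigr)^{\lambda_j} \leq e^{M} e^{-\xi_i \cdot x}, \qquad x \in A_i^+.
\]
Now pick a smooth partition of unity $\chi_1, \dots, \chi_m$ on $\R^d$---for instance by convolving the characteristic functions of the regions where $\xi_i \cdot x$ is uniquely minimal with a fixed mollifier supported in $B(0, 1/2)$---so that $\operatorname{supp} \chi_i \subseteq A_i^+$ and all derivatives of $\chi_i$ are bounded on $\R^d$. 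Then $g_{\eta, i}(x) \coleq e^{-(\eta - \xi_i) \cdot x} \chi_i(x)$, extended by $0$ outside $A_i^+$, is smooth on $\R^d$ and bounded together with all of its derivatives, uniformly in $\eta \in K$.

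To conclude, decompose $\varphi = \sum_i \chi_i \varphi$. Since $g_{\eta, i}$ is a multiplier of $\mathcal{S}(\R^d)$ and $\mathcal{D}(\R^d)$ is dense in $\mathcal{S}(\R^d)$, one verifies
\[
\langle e^{-\eta \cdot x} f, \varphi \rangle = \sum_{i=1}^m \langle e^{-\xi_i \cdot x} f,\; g_{\eta,i}\, \varphi \rangle, \qquad \eta \in K,\ \varphi \in \mathcal{S}(\R^d),
\]
first for $\varphi \in \mathcal{D}(\R^d)$ via the distributional identity $e^{-(\eta - \xi_i) \cdot x} \cdot e^{-\xi_i \cdot x} f = e^{-\eta \cdot x} f$, and then by extension using density and the continuity of both sides on $\mathcal{S}(\R^d)$. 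A routine Leibniz estimate built on the uniform bounds on the derivatives of $g_{\eta, i}$ shows that $B_i \coleq \{ g_{\eta, i}\, \varphi : \eta \in K, \varphi \in B \}$ is bounded in $\mathcal{S}(\R^d)$, and since $e^{-\xi_i \cdot x} f \in \mathcal{S}'(\R^d)$ by hypothesis, we obtain $p_{K,B}(f) \leq \sum_{i=1}^m q_{\xi_i, B_i}(f) < \infty$, which proves both claims at once. The main technical obstacle is the geometric construction of the adapted partition of unity together with the quantitative bound for $g_{\eta, i}$; beyond that, only standard Schwartz-multiplier manipulations are required.
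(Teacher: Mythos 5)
The paper offers no proof of this lemma at all---it is quoted from Schwartz \cite[p.~301]{SCH3}---so there is nothing internal to compare against; your argument has to stand on its own, and it does. The core idea is sound: dominate $e^{-\eta\cdot x}$ for $\eta$ ranging over $K$ by finitely many exponentials $e^{-\xi_i\cdot x}$ with $\xi_i\in\Gamma$, localized to the cones where $\xi_i\cdot x$ is minimal, and convert this into the single estimate $p_{K,B}(f)\leq\sum_i q_{\xi_i,B_i}(f)$, which simultaneously yields finiteness and continuity of $p_{K,B}$ for the projective limit topology; the reverse comparison $q_{\xi,B}=p_{\{\xi\},B}$ is immediate. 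All the steps check out: the inequality $\xi_j\cdot x\geq\xi_i\cdot x-M$ on $A_i^+$ gives $e^{-(\eta-\xi_i)\cdot x}\leq e^M$ there, the derivatives of $g_{\eta,i}$ pick up only powers of $|\eta-\xi_i|\leq M$, so $g_{\eta,i}$ is a Schwartz multiplier uniformly in $\eta\in K$, and the density of $\mathcal{D}(\R^d)$ in $\mathcal{S}(\R^d)$ legitimizes the extension of the splitting identity. Two small points you gloss over are genuinely standard but worth a line each in a written version: the existence of $\xi_1,\dots,\xi_m\in\Gamma$ with $K\subseteq\operatorname{conv}\{\xi_1,\dots,\xi_m\}$ (cover $K$ by small cubes contained in $\Gamma$ and take all their vertices; note the interior condition you impose is not actually needed), and the fact that the mollified indicators really sum to $1$ everywhere (the tie sets $\{\xi_i\cdot x=\xi_j\cdot x\}$ are null provided the $\xi_i$ are distinct). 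For comparison, Schwartz's classical route avoids the partition of unity altogether: from $e^{-\eta\cdot x}=\prod_j(e^{-\xi_j\cdot x})^{\lambda_j}\leq\sum_j e^{-\xi_j\cdot x}$ one writes $e^{-\eta\cdot x}f=\theta_\eta\cdot\bigl(\sum_j e^{-\xi_j\cdot x}f\bigr)$ with $\theta_\eta=e^{-\eta\cdot x}/\sum_j e^{-\xi_j\cdot x}$, and checks that $\{\theta_\eta\mid\eta\in K\}$ is bounded in $\mathcal{O}_M(\R^d)$; this is slightly slicker since the single global multiplier $\theta_\eta$ replaces your cones, thickenings and mollification, but your version is equally valid and arguably more transparent about where each exponential is doing the work.
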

We need to introduce some additional terminology. Given a  non-empty compact convex subset $K$ of $\R^d$, we define its \emph{supporting function} as
\[
h_K(x) =\max_{\eta \in K} x \cdot \eta, \qquad x \in \R^d.
\]
It is clear from the definition that $h_K$ is subadditive and positive homogeneous of degree one. In particular, $h_K$ is convex. Supporting functions have  the following elementary properties.
\begin{lemma} {\cite[Cor.\ 1.8.2 and Prop.\ 1.8.3]{Morimoto}} \label{supporting-function}
Let $K_1$ and $K_2$ be non-empty compact convex subsets of $\R^d$. 
\begin{itemize}
\item[$(a)$] $K_1 \subseteq K_2$ if and only if $h_{K_1}(x) \leq h_{K_2}(x)$ for all $x \in \R^d$.
\item[$(b)$] $h_{K_1+K_2}(x) = h_{K_1}(x) + h_{K_2}(x)$ for all $x \in \R^d$.
\end{itemize}
\end{lemma}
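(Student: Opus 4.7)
The plan is to prove (a) and (b) separately, since both are standard consequences of elementary convex geometry.

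For part (a), one implication is immediate: if $K_1 \subseteq K_2$, then the supremum defining $h_{K_1}(x)$ is taken over a smaller set than that defining $h_{K_2}(x)$, so $h_{K_1}(x) \leq h_{K_2}(x)$ for every $x \in \R^d$. For the nontrivial converse I would argue by contrapositive: assume there exists $\eta_0 \in K_1 \setminus K_2$ and construct an $x \in \R^d$ at which $h_{K_1}(x) > h_{K_2}(x)$. Since $K_2$ is closed and convex and $\{\eta_0\}$ is compact and disjoint from $K_2$, the Hahn–Banach separation theorem (or, more concretely, projection onto the closed convex set $K_2$) yields $x \in \R^d$ and $c \in \R$ with $x \cdot \eta \leq c < x \cdot \eta_0$ for all $\eta \in K_2$. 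Taking supremum over $\eta \in K_2$ gives $h_{K_2}(x) \leq c < x \cdot \eta_0 \leq h_{K_1}(x)$, the desired contradiction. The main (and only) subtle point is this appeal to a separation argument; everything else is bookkeeping.

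For part (b), the identity reduces to the observation that the maximum of a sum of two independent linear quantities is the sum of the maxima. Explicitly, since every point of $K_1 + K_2$ is of the form $\eta_1 + \eta_2$ with $\eta_i \in K_i$, one writes
\[
h_{K_1 + K_2}(x) = \max_{\eta_1 \in K_1,\, \eta_2 \in K_2} x \cdot (\eta_1 + \eta_2) = \max_{\eta_1 \in K_1} x \cdot \eta_1 + \max_{\eta_2 \in K_2} x \cdot \eta_2 = h_{K_1}(x) + h_{K_2}(x),
\]
where splitting the joint maximum is legitimate because the two summands depend on disjoint variables and each maximum is attained (by compactness of $K_1$ and $K_2$).

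In short, (b) is purely computational once the definition is unfolded, and (a) rests on Hahn–Banach separation applied to the closed convex set $K_2$ and the exterior point $\eta_0$. Since the statement is quoted from Morimoto, no proof needs to appear in the paper, but the outline above is the standard argument.
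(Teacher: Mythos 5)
Your proof is correct: the forward direction of (a) and all of (b) are the routine computations you describe, and the converse of (a) is exactly the standard strict-separation argument for a point outside a compact convex set. The paper itself gives no proof of this lemma --- it only cites \cite[Cor.\ 1.8.2 and Prop.\ 1.8.3]{Morimoto} --- and your outline is precisely the classical argument found there, so there is nothing to compare beyond noting agreement.
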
 
\begin{example}\label{example} For $r > 0$ we have $h_{\overline{B}(0,r)}(x) = r \abso{x}$ for all $x \in \R^d$, where $\overline{B}(0,r)$ denotes the closed ball in $\R^d$ centered at the origin with radius $r$. Next, let $K$ be a non-empty compact convex subset of $\R^d$ and  $\varepsilon > 0$. We set $K_\varepsilon = K + \overline{B}(0,\varepsilon)$. Lemma \ref{supporting-function} and the above  yield that $h_{K_\varepsilon}(x) = h_K(x) + \varepsilon\abso{x}$ for all $x \in \R^d$.
\end{example}
Let $\emptyset \neq \Gamma \subseteq \R^d$ be open and convex and let $(K_N)_{N \in \N} \subset \CCS(\Gamma)$ be such that $K_N \subseteq K_{N+1}$ for all $N \in \N$ and $\Gamma = \bigcup_{N} K_N$. Lemma \ref{supporting-function} yields that $\mathcal{W} = (e^{h_{-K_N}})_{N \in \N}$ is an increasing weight system. We set $C_\Gamma(\R^d) \coleq \mathcal{W}C(\R^d)$. Clearly, the definition of $C_\Gamma(\R^d)$ is independent of the chosen sequence $(K_N)_{N \in \N}$. The next result is the key observation of this article.
\begin{proposition}\label{STFT-char}
Let $\emptyset \neq \Gamma \subseteq \R^d$ be open and convex and let $\psi \in \mathcal{D}(\R^d)$. Then, the mappings
\[
V_\psi\colon \mathcal{S}'(\Gamma) \rightarrow C_\Gamma(\R^d_x) \widehat{\otimes}_\varepsilon \Vpol C(\R^d_\xi)
\]
and
\[
V^*_\psi\colon  C_\Gamma(\R^d_x) \widehat{\otimes}_\varepsilon \Vpol C(\R^d_\xi) \rightarrow  \mathcal{S}'(\Gamma)
\]
are well-defined and continuous.
\end{proposition}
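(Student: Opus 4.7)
The strategy is to translate the two claims into concrete weighted sup-norm estimates, using on one side the seminorms $p_{K, B}$ generating the topology of $\mathcal{S}'(\Gamma)$ and on the other side the projective description of $C_\Gamma(\R^d_x)\widehat{\otimes}_\varepsilon \Vpol C(\R^d_\xi)$ from Lemma \ref{proj-desc}. I fix an exhausting sequence $(K_N)_{N \in \N} \subset \CCS(\Gamma)$ with $K_N \subset \mathrm{int}(K_{N+1})$ and $\bigcup_N K_N = \Gamma$, so that $\mathcal{W} = (e^{h_{-K_N}})_N$ generates the topology of $C_\Gamma(\R^d)$.

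For $V_\psi$, I begin from the identity
\[
V_\psi f(x,\xi) = \langle e^{-\eta \cdot t}f(t),\ e^{\eta \cdot t}\overline{\psi(t-x)}e^{-2\pi i\xi \cdot t}\rangle,\qquad \eta \in \Gamma,
\]
which makes sense because $\psi$ has compact support. A direct Leibniz computation then shows that for any compact $K \subset \Gamma$ and suitable $k, m \in \N$, the family
\[
B \coleq \bigl\{ e^{-\eta\cdot x}(1+|x|)^{-k}(1+|\xi|)^{-m}\, e^{\eta\cdot t}\overline{\psi(t-x)}e^{-2\pi i\xi\cdot t} \,:\, \eta \in K,\ x,\xi \in \R^d \bigr\}
\]
is bounded in $\mathcal{S}(\R^d)$. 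Applying the seminorm $p_{K, B}$ and then taking the infimum over $\eta \in K$ yields
\[
|V_\psi f(x,\xi)| \leq C\, p_{K, B}(f)\,(1+|x|)^k(1+|\xi|)^m e^{-h_{-K}(x)}.
\]
The main obstacle is the spurious polynomial factor $(1+|x|)^k$, which I intend to absorb as follows: apply the above with $K = K_N + \overline{B}(0,\varepsilon)$ for a small $\varepsilon > 0$ with $K \subset \Gamma$, so that by Example \ref{example} one has $h_{-K}(x) = h_{-K_N}(x) + \varepsilon|x|$ and the extra factor $e^{-\varepsilon|x|}$ dominates $(1+|x|)^k$. In view of Lemma \ref{proj-desc}, the resulting bound on $|V_\psi f(x,\xi)|\, e^{h_{-K_N}(x)}(1+|\xi|)^{-m}$ delivers both the well-definedness and the continuity of $V_\psi \colon \mathcal{S}'(\Gamma) \to C_\Gamma(\R^d_x)\widehat{\otimes}_\varepsilon \Vpol C(\R^d_\xi)$.

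For $V_\psi^*$, given $F$ in the target and $\eta \in \Gamma$, my plan is to show that
\[
T_\eta(\varphi) \coleq \iint F(x,\xi)\, V_{\overline{\psi}}(e^{-\eta\cdot t}\varphi)(x,-\xi)\, \dx\, \dxi
\]
is well-defined and continuous on $\mathcal{S}(\R^d)$; since it clearly coincides with $\langle e^{-\eta\cdot t}V_\psi^*F,\cdot\rangle$ on the dense subspace $\mathcal{D}(\R^d)$, this identifies $e^{-\eta\cdot t}V_\psi^*F$ with an element of $\mathcal{S}'(\R^d)$. The substitution $u = t-x$ gives
\[
V_{\overline{\psi}}(e^{-\eta\cdot t}\varphi)(x,-\xi) = e^{(-\eta+2\pi i\xi)\cdot x}\int (e^{-\eta\cdot u}\psi(u))\,\varphi(u+x)\, e^{2\pi i\xi\cdot u}\,\du,
\]
in which $e^{-\eta\cdot u}\psi(u) \in \mathcal{D}(\R^d)$; iterated integration by parts in $u$ together with the Schwartz decay of $\varphi$ and the estimate $(1+|x|)^k \leq (1+R)^k(1+|u+x|)^k$ on $u \in \mathrm{supp}(\psi)$ yield, for any compact $K \subset \Gamma$ and any $k, n \in \N$,
\[
|V_{\overline{\psi}}(e^{-\eta\cdot t}\varphi)(x,-\xi)| \leq C_{k,n,K}\, e^{-\eta\cdot x}(1+|x|)^{-k}(1+|\xi|)^{-n}\, \|\varphi\|_{\mathcal{S},\ell},\qquad \eta \in K.
\]
Combining this with the bound $|F(x,\xi)| \leq C_F e^{-h_{-K_N}(x)}(1+|\xi|)^m$ from Lemma \ref{proj-desc} and choosing $N$ large enough that $K \subset \mathrm{int}(K_N)$, the key uniform inequality
\[
-h_{-K_N}(x) - \eta\cdot x = \min_{\eta' \in K_N}(\eta' - \eta)\cdot x \leq -\delta|x|,\qquad \eta \in K,
\]
holds for some $\delta > 0$ and produces exponential decay in $x$; then $n$ chosen sufficiently large makes the double integral absolutely convergent with modulus $\leq C_{K, F}\,\|\varphi\|_{\mathcal{S},\ell}$ uniformly in $\eta \in K$, yielding the continuity of $V_\psi^*$.

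The conceptual common thread, which I expect to be the real heart of both arguments, is that by Lemma \ref{supporting-function} and Example \ref{example} an $\varepsilon$-enlargement of $K_N$ inside $\Gamma$ converts into an exponential gain $e^{\pm\varepsilon|x|}$; this is precisely what aligns the support-function weights defining $C_\Gamma(\R^d)$ with the Schwartz-type estimates carried by $\mathcal{S}'(\Gamma)$.
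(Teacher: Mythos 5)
Your overall architecture matches the paper's: for $V_\psi$ you pair $e^{-\eta\cdot t}f$ against a family of windows with $\eta$ chosen, depending on $x$, to realize $h_{-K}(x)$, and for $V_\psi^*$ you estimate $V_{\overline{\psi}}(e^{-\eta\cdot t}\varphi)(x,-\xi)$ and identify $e^{-\eta\cdot t}V_\psi^*F$ with a functional on $\mathcal{S}(\R^d)$. There are, however, two genuine gaps, both caused by using fixed polynomial weights where super-polynomial ones are forced. First, your family
\[
B=\bigl\{e^{-\eta\cdot x}(1+\abso{x})^{-k}(1+\abso{\xi})^{-m}\,e^{\eta\cdot t}\overline{\psi(t-x)}e^{-2\pi i\xi\cdot t}\ :\ \eta\in K,\ x,\xi\in\R^d\bigr\}
\]
is \emph{not} bounded in $\mathcal{S}(\R^d_t)$ for any choice of $k,m$: boundedness requires $\sup_B\norm{\cdot}_{\mathcal{S}^n_j}<\infty$ for \emph{all} $n,j$, and since these functions are supported in $\abso{t-x}\le r$, the weight $(1+\abso{t})^j$ contributes a factor comparable to $(1+\abso{x})^{j}$ for every $j$, while $n$ derivatives of the modulation contribute $(1+\abso{\xi})^{n}$ for every $n$; the fixed prefactors $(1+\abso{x})^{-k}(1+\abso{\xi})^{-m}$ cannot absorb these. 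Hence $p_{K,B}$ is not a continuous seminorm on $\mathcal{S}'(\Gamma)$ and your intermediate estimate is unfounded. Your own observation that an $\e$-enlargement of $K$ yields a gain $e^{-\e\abso{x}}$ is exactly the cure, but it must be built \emph{into} the family rather than applied afterwards: as in Lemma~\ref{lemma-1}, one uses the windows $e^{\tau\cdot(t-x)}\overline{M_\xi T_x\psi}(t)\,e^{-\e\abso{x}}v(\xi)$ with $\tau\in K_\e$ and $v\in\overline{V}(\Vpol)$ (so $v$ decays faster than every polynomial), which makes all Schwartz seminorms uniformly finite.

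Second, your continuity argument for $V_\psi^*$ ends with $\abso{T_\eta(\varphi)}\le C_{K,F}\norm{\varphi}_{\mathcal{S},\ell}$, where the constant depends on $F$ both through $C_F$ and through the exponent $m$ in $\abso{F(x,\xi)}\le C_F e^{-h_{-K_N}(x)}(1+\abso{\xi})^{m}$. This proves well-definedness, i.e.\ $V_\psi^*F\in\mathcal{S}'(\Gamma)$, but not continuity of the linear map: for that one must dominate $p_{K,B}(V_\psi^*F)$ by $C\norm{F}_{K',w}$ with $K'\in\CCS(\Gamma)$, $w\in\overline{V}(\Vpol)$ and $C$ independent of $F$. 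Note that $\sup\abso{F}e^{h_{-K_N}}(1+\abso{\xi})^{-m}$ with fixed $m$ is not a continuous seminorm on $C_\Gamma(\R^d_x)\widehat{\otimes}_\varepsilon\Vpol C(\R^d_\xi)$ (it is even infinite for some $F$ in the space, since the admissible $m$ varies with $F$). The paper resolves this by extracting from Lemma~\ref{lemma-2} a single weight $v\in\overline{V}(\Vpol)$ — essentially $v(\xi)=\inf_n A_n(1+\abso{\xi})^{-n}$ with $A_n=\sup_{\eta\in K,\varphi\in B}C_{\eta,0,n,\psi}\norm{\varphi}_{\mathcal{S}^n_0}$ — such that $\abso{V_{\overline{\psi}}(e^{-\eta\cdot t}\varphi)(x,-\xi)}\le e^{h_{-K}(x)}v(\xi)$ uniformly in $\eta\in K$ and $\varphi\in B$, and then pairing with $\norm{F}_{K_\e,w}$ for $w=v(1+\abso{\xi})^{d+1}$, the $\e$-enlargement supplying the integrable factor $e^{-\e\abso{x}}$. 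Your uniform inequality $-h_{-K_N}(x)-\eta\cdot x\le-\delta\abso{x}$ is correct and serves the same purpose, but it has to be deployed on the $\varphi$-side estimate rather than combined with an $F$-dependent bound.
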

We need some preparation for the proof of Proposition \ref{STFT-char}. Firstly, Lemma \ref{proj-desc} implies that the the topology of $C_\Gamma(\R^d_x) \widehat{\otimes}_\varepsilon \Vpol C(\R^d_\xi)$ is generated by the system of seminorms
\[
\norm{ f }_{K,v} \coleq \sup_{(x,\xi) \in \R^{2d}}   \abso{f(x,\xi)}e^{h_{-K}(x)} v(\xi) < \infty, \qquad K \in CCS(\Gamma), v \in \overline{V}(\Vpol ).
\]
For $k,n \in \N$ we write 
\[
\norm{ \varphi }_{\mathcal{S}^n_k} \coleq \max_{\abso{\alpha} \leq n} \sup_{x \in \R^d} \abso{\partial^\alpha \varphi(x)} (1+\abso{x})^k, \qquad \varphi \in \mathcal{S}(\R^d).
\]
The topology of $ \mathcal{S}(\R^d)$ is generated by the system of seminorms $\{ \norm{ \,\cdot\, }_{\mathcal{S}^n_k} \, | \, k,n \in \N \}$. We now give two technical lemmas.
\begin{lemma}\label{lemma-1}
Let $\psi \in \mathcal{D}(\R^d)$, $K \subset \R^d$ be compact,  $v \in \overline{V}(\Vpol )$ and $\varepsilon > 0$. Then, 
\[
\{ e^{\eta\cdot(t-x)} \overline{M_\xi T_x\psi}(t) e^{-\varepsilon\abso{x}}v(\xi) \, | \, (x,\xi) \in \R^{2d}, \eta \in K\} \in  \mathfrak{B}(\mathcal{S}(\R^d_t)).
\]
\end{lemma}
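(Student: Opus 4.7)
The goal is to show that the indicated set is bounded in $\mathcal{S}(\R^d_t)$, i.e., that $\norm{\,\cdot\,}_{\mathcal{S}^n_k}$ is finite uniformly over $(x,\xi)\in\R^{2d}$ and $\eta\in K$ for every $k,n\in\N$. My approach is a direct pointwise estimate that separately exploits four distinct features: the compact support of $\psi$, the compactness of $K$, the rapid decay built into $v\in\overline{V}(\Vpol)$, and the damping factor $e^{-\varepsilon\abso{x}}$. First I would fix $R>0$ with $\operatorname{supp}\psi\subseteq\overline{B}(0,R)$ and write $f_{x,\xi,\eta}(t) \coleq e^{\eta\cdot(t-x)}\overline{M_\xi T_x\psi}(t)\,e^{-\varepsilon\abso{x}}v(\xi)$, which is then supported in $\overline{B}(x,R)$.

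To estimate derivatives, I would combine the two exponential factors into $e^{(\eta-2\pi i\xi)\cdot t-\eta\cdot x}$ and apply the Leibniz rule, obtaining
\[
\partial_t^\alpha f_{x,\xi,\eta}(t)=\sum_{\beta\leq\alpha}\binom{\alpha}{\beta}(\eta-2\pi i\xi)^{\beta}\,e^{(\eta-2\pi i\xi)\cdot t-\eta\cdot x}\,\overline{\partial^{\alpha-\beta}\psi(t-x)}\,e^{-\varepsilon\abso{x}}v(\xi).
\]
On $\operatorname{supp}\psi(t-x)$ we have $\abso{t-x}\leq R$, and since $\eta$ ranges over the compact set $K$, $\abso{e^{\eta\cdot(t-x)}}\leq e^{R\max_{\eta\in K}\abso{\eta}}$; likewise $\abso{\eta}^{\abso{\beta}}$ is uniformly bounded. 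For $\abso{\alpha}\leq n$ this yields a bound of the form
\[
\abso{\partial_t^\alpha f_{x,\xi,\eta}(t)}\leq C_{K,n,\psi}\,(1+\abso{\xi})^{n}\,v(\xi)\,e^{-\varepsilon\abso{x}}\,\mathbf{1}_{\abso{t-x}\leq R}.
\]

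To finish I would use that $v\in\overline{V}(\Vpol)$ means $\sup_\xi v(\xi)(1+\abso{\xi})^N<\infty$ for every $N\in\N$, so $(1+\abso{\xi})^{n}v(\xi)$ is bounded by a constant depending only on $v$ and $n$. Hence $\abso{\partial_t^\alpha f_{x,\xi,\eta}(t)}\leq C\,e^{-\varepsilon\abso{x}}\mathbf{1}_{\abso{t-x}\leq R}$. To insert the polynomial weight, I would note that on the support $\abso{t}\leq\abso{x}+R$, so $(1+\abso{t})^k\leq(1+R+\abso{x})^k$, and $\sup_x(1+R+\abso{x})^k\,e^{-\varepsilon\abso{x}}<\infty$. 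This gives the uniform bound $\norm{f_{x,\xi,\eta}}_{\mathcal{S}^n_k}\leq C_{K,n,k,\psi,v,\varepsilon}$, proving boundedness in $\mathcal{S}(\R^d_t)$.

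I do not foresee a real obstacle: the statement is a careful but routine estimate. What matters is noticing how each hypothesis contributes — compactness of $\operatorname{supp}\psi$ localizes the support, compactness of $K$ tames the potentially growing exponential $e^{\eta\cdot(t-x)}$, membership of $v$ in $\overline{V}(\Vpol)$ absorbs the polynomial growth in $\xi$ coming from differentiating the modulation, and the factor $e^{-\varepsilon\abso{x}}$ is essential for swallowing the polynomial $(1+R+\abso{x})^k$ arising from translating the weight $(1+\abso{t})^k$ back to the origin.
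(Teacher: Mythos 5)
Your proof is correct and follows essentially the same route as the paper's: a direct Leibniz-rule estimate in which the compact support of $\psi$ localizes $t$ near $x$, compactness of $K$ bounds $e^{\eta\cdot(t-x)}$ and the powers of $\eta$, the condition $\sup_\xi v(\xi)(1+\abso{\xi})^N<\infty$ absorbs the polynomial growth in $\xi$ from differentiating the modulation, and $e^{-\varepsilon\abso{x}}$ swallows the translated weight $(1+R+\abso{x})^k$. The only (cosmetic) difference is that you merge the two exponentials before applying Leibniz, yielding a single sum where the paper keeps a double sum over $\gamma\leq\beta\leq\alpha$.
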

\begin{proof}
Choose $r > 0$ such that $\operatorname{supp} \psi \subseteq \overline{B}(0,r)$ and $R \geq 1$ such that $K \subseteq \overline{B}(0,R)$. For all $k,n \in \N$ we have that
\begin{align*}
&\sup_{(x,\xi) \in \R^{2d}} \sup_{\eta \in K}e^{-\varepsilon\abso{x}}v(\xi) \norm{ e^{\eta\cdot(t-x)} \overline{M_\xi T_x\psi}(t) }_{\mathcal{S}^n_{k,t}} \leq \sup_{(x,\xi) \in \R^{2d}} \sup_{\eta \in K}e^{-\varepsilon\abso{x}}v(\xi) \cdot \\
&\max_{\abso{\alpha} \leq n} \sup_{x \in \R^d} \sum_{\beta \leq \alpha} \sum_{\gamma \leq \beta} \binom{\alpha}{\beta}  \binom{\beta}{\gamma} \abso{\eta}^{\abso{\alpha} -\abso{\beta}} e^{\eta\cdot(t-x)} (2\pi \abso{\xi})^{\abso{\gamma}} \abso{\partial^{\beta-\gamma} \overline{\psi}(t-x)} (1+\abso{t})^k \\
&\leq  e^{Rr} (8\pi R)^n \max_{\abso{\alpha} \leq n } \norm{ \partial^\alpha \overline{\psi} }_{L^\infty} (1+r)^k \sup_{x \in \R^d} e^{-\varepsilon\abso{x}}(1+\abso{x})^k \sup_{\xi \in \R^d}v(\xi) (1+\abso{\xi})^n \\
&< \infty. \qedhere
\end{align*}
\end{proof}

\begin{lemma}\label{lemma-2}
Let $\psi \in \mathcal{D}(\R^d)$ and $\eta \in \R^d$. Then, for all  $k,n \in \N$ and $\varphi \in \mathcal{S}(\R^d)$,
\[
\abso{V_{\overline{\psi},t}( e^{-\eta \cdot t} \varphi(t))(x, -\xi)} \leq \frac{C_{\eta,k,n,\psi}e^{-\eta\cdot x} \norm{ \varphi }_{\mathcal{S}^n_k}}{(1+\abso{x})^k(1+\abso{\xi})^n}, \qquad (x,\xi) \in \R^{2d},
\]
where
\[
C_{\eta,k,n,\psi} = 4^n(1+\sqrt{d})^n \max\{1,\abso{\eta}^n\} \max_{\abso{\alpha} \leq n} \norm{ \partial^\alpha \psi }_{L^\infty} \int_{\operatorname{supp} \psi } e^{-\eta \cdot t} (1+\abso{t})^k \dt. 
\]
In particular, $\sup_{\eta \in K} C_{\eta,k,n,\psi} < \infty$ for all $K \subset \R^d$ compact.
\end{lemma}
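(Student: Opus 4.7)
The plan is to treat the $x$- and $\xi$-decays of $V_{\overline\psi,t}(e^{-\eta\cdot t}\varphi(t))(x,-\xi)$ separately and then combine them. Writing out the definition and using $\overline{\overline\psi}=\psi$,
\[
V_{\overline\psi,t}(e^{-\eta\cdot t}\varphi(t))(x,-\xi)=\int_{\R^d}\varphi(t)\psi(t-x)e^{-\eta\cdot t}e^{2\pi i\xi\cdot t}\,\dt.
\]
The integrand is supported in $t\in x+\operatorname{supp}\psi$, which is the source of the factor $\int_{\operatorname{supp}\psi}e^{-\eta\cdot s}(1+|s|)^k\,\ds$ in the claimed constant. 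The prefactor $e^{-\eta\cdot x}$ will appear after the substitution $s=t-x$ combined with $e^{-\eta\cdot t}=e^{-\eta\cdot x}e^{-\eta\cdot s}$.

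For the $x$-decay I apply the Peetre-type inequality $(1+|x|)^{-k}\leq(1+|x+s|)^{-k}(1+|s|)^k$ to the substituted integral, together with the bound $|\partial^\gamma\varphi(t)|\leq\|\varphi\|_{\mathcal{S}^n_k}(1+|t|)^{-k}$ valid for $|\gamma|\leq n$. For the $\xi$-decay, for each multi-index $\alpha$ with $|\alpha|\leq n$ I integrate by parts $|\alpha|$ times against $e^{2\pi i\xi\cdot t}$ to obtain
\[
(2\pi i\xi)^\alpha V_{\overline\psi,t}(e^{-\eta\cdot t}\varphi)(x,-\xi)=(-1)^{|\alpha|}\int_{\R^d}\partial_t^\alpha\left[\varphi(t)\psi(t-x)e^{-\eta\cdot t}\right]e^{2\pi i\xi\cdot t}\,\dt,
\]
and expand $\partial_t^\alpha$ via the trinomial Leibniz rule. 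The resulting $3^{|\alpha|}\leq 3^n$ terms are bounded using $|\eta^\beta|\leq\max\{1,|\eta|^n\}$, $|\partial^\delta\psi|\leq\max_{|\alpha'|\leq n}\|\partial^{\alpha'}\psi\|_{L^\infty}$, and the pointwise bound on $\partial^\gamma\varphi$ above; combining with the $x$-decay argument gives, for every $|\alpha|\leq n$,
\[
(2\pi)^{|\alpha|}|\xi^\alpha|\cdot|V_{\overline\psi,t}(e^{-\eta\cdot t}\varphi)(x,-\xi)|\leq 3^n\max\{1,|\eta|^n\}\,\Omega\,\|\varphi\|_{\mathcal{S}^n_k}\,\frac{e^{-\eta\cdot x}}{(1+|x|)^k}\,I(\eta),
\]
where $\Omega:=\max_{|\alpha'|\leq n}\|\partial^{\alpha'}\psi\|_{L^\infty}$ and $I(\eta):=\int_{\operatorname{supp}\psi}e^{-\eta\cdot s}(1+|s|)^k\,\ds$.

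Finally I convert this monomial-in-$\xi$ bound into the claimed bound involving $(1+|\xi|)^n$, using $(1+|\xi|)^n\leq(1+\sqrt d\max_i|\xi_i|)^n\leq(1+\sqrt d)^n(1+\max_i|\xi_i|^n)$. For $n=0$ the claim is just the $\alpha=0$ case of the previous display; for $n\geq 1$, combining the displayed estimate at $\alpha=0$ and at $\alpha=ne_i$ (with $i$ realizing the maximum) and using $3^n/(2\pi)^n\leq 1$ gives a total of at most $2(1+\sqrt d)^n\max\{1,|\eta|^n\}\Omega I(\eta)\leq 4^n(1+\sqrt d)^n\max\{1,|\eta|^n\}\Omega I(\eta)$, matching the statement. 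The ``in particular'' clause is then immediate: both $\max\{1,|\eta|^n\}$ and $I(\eta)$ are continuous functions of $\eta$ (the latter because the integrand is continuous in $\eta$ and uniformly bounded over $s\in\operatorname{supp}\psi$ for $\eta$ in any compact set), hence bounded on any compact $K\subset\R^d$. The entire argument is routine; the main difficulty is just the careful bookkeeping of constants.
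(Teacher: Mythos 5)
Your proof is correct and follows essentially the same route as the paper's: write out the integral, extract $e^{-\eta\cdot x}$ and the factor $\int_{\operatorname{supp}\psi}e^{-\eta\cdot s}(1+\abso{s})^k\,\ds$ by translating to $\operatorname{supp}\psi$, get the $x$-decay from the Peetre inequality and the Schwartz seminorm of $\varphi$, and get the $\xi$-decay by multiplying by $\xi^\alpha$, integrating by parts, and expanding with the Leibniz rule. The only (cosmetic) difference is that you reduce $(1+\abso{\xi})^n$ to the cases $\alpha=0$ and $\alpha=ne_i$, whereas the paper uses $(1+\abso{\xi})^n\leq(1+\sqrt d)^n\max_{\abso{\alpha}\leq n}\abso{\xi^\alpha}$; both yield the stated constant.
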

\begin{proof}
We have that
\begin{align*}
&\abso{V_{\overline{\psi},t}( e^{-\eta \cdot t} \varphi(t))(x, -\xi)} (1+\abso{x})^k (1 + \abso{\xi})^n \\
& \leq (1+ \sqrt{d})^n \max_{\abso{\alpha} \leq n}\abso{\xi^\alpha V_{\overline{\psi},t}( e^{-\eta \cdot t} \varphi(t))(x, -\xi)} (1+\abso{x})^k \\
& \leq (1+ \sqrt{d})^n (1+\abso{x})^k\max_{\abso{\alpha} \leq n} \sum_{\beta \leq \alpha} \sum_{\gamma \leq \beta} \binom{\alpha}{\beta}  \binom{\beta}{\gamma} \cdot \\
& \int_{\R^d} \abso{\eta}^{\abso{\gamma}}e^{-\eta \cdot t} \abso{\partial^{\beta-\gamma}\varphi(t)} \abso{\partial^{\alpha-\beta}\psi(t-x)} \dt\\
& \leq (1+ \sqrt{d})^n (1+\abso{x})^k\max_{\abso{\alpha} \leq n} \sum_{\beta \leq \alpha} \sum_{\gamma \leq \beta} \binom{\alpha}{\beta}  \binom{\beta}{\gamma} \cdot \\ 
&\int_{\operatorname{supp} \psi} \abso{\eta}^{\abso{\gamma}}e^{-\eta \cdot (t+x)} \abso{\partial^{\beta-\gamma}\varphi(t+x)} \abso{\partial^{\alpha-\beta}\psi(t)} \dt \\
& \leq C_{\eta,k,n,\psi}e^{-\eta\cdot x} \norm{ \varphi }_{\mathcal{S}^n_k}. \qedhere
\end{align*}

\end{proof}
\begin{proof}[Proof of Proposition \ref{STFT-char}]
$(i)$ \emph{$V_\psi\colon \mathcal{S}'(\Gamma) \rightarrow C_\Gamma(\R^d_x) \widehat{\otimes}_\varepsilon \Vpol C(\R^d_\xi)$ is well-defined and continuous}: Let $K \in \CCS(\Gamma)$ and $v \in \overline{V}(\Vpol)$ be arbitrary. Choose $\varepsilon > 0$ so small that $K_\varepsilon \in \CCS(\Gamma)$ and pick, for $x \in \R^d$ fixed, $\eta_x \in K$ such that $h_{-K}(x) \leq (-\eta_x \cdot x) + 1$. Example \ref{example} implies that, for all $f \in \mathcal{S}'(\Gamma)$ and $(x,\xi) \in \R^{2d}$, 
\begin{align*}
\abso{V_\psi f(x,\xi)} e^{h_{-K}(x)}v(\xi) &= \abso{\langle e^{-(\eta_x - \varepsilon \frac{x}{\abso{x}}) \cdot t} f(t), e^{(\eta_x - \varepsilon \frac{x}{\abso{x}}) \cdot t} \overline{M_\xi T_x\psi}(t) \rangle} e^{h_{-K}(x)}v(\xi)  \\
&\leq e \abso{\langle e^{-(\eta_x - \varepsilon \frac{x}{\abso{x}}) \cdot t} f(t), e^{(\eta_x - \varepsilon \frac{x}{\abso{x}}) \cdot (t-x)} \overline{M_\xi T_x\psi}(t) \rangle} e^{-\varepsilon\abso{x}} v(\xi)  \\
&\leq ep_{K_\varepsilon,B}(f),
\end{align*}
where 
\[
B = \{ e^{\tau\cdot(t-x)} \overline{M_\xi T_x\psi}(t) e^{-\varepsilon\abso{x}}v(\xi) \, | \, (x,\xi) \in \R^{2d}, \tau \in K_\varepsilon\} \in  \mathfrak{B}(\mathcal{S}(\R^d_t))
\]
by Lemma \ref{lemma-1}.

\noindent $(ii)$ \emph{$V^\ast_\psi\colon C_\Gamma(\R^d_x) \widehat{\otimes}_\varepsilon \Vpol C(\R^d_\xi) \rightarrow \mathcal{S}'(\Gamma)$ is well-defined and continuous}: We start by showing that $V^\ast_\psi F \in \mathcal{S}'(\Gamma)$ for all $F \in C_\Gamma(\R_x^d) \widehat{\otimes}_\varepsilon \Vpol C(\R_\xi^d)$. Lemma \ref{lemma-2} implies that, for all $\eta \in \Gamma$,
\[
\langle f_\eta, \varphi \rangle =  \int \int_{\R^{2d}} F(x,\xi) V_{\overline{\psi},t}( e^{-\eta \cdot t} \varphi(t))(x, -\xi) \,\dx\,\dxi, \qquad \varphi \in \mathcal{S}(\R^d),
\]
is a well-defined continous linear functional on $\mathcal{S}(\R^d)$. Since $e^{-\eta \cdot t}V^\ast_\psi F(t) = {f_\eta(t)}|_{\mathcal{D}(\R^d)}$, we obtain that $e^{-\eta \cdot t}V^\ast_\psi F(t) \in \mathcal{S}'(\R^d)$ and that
\[
\langle e^{-\eta \cdot t}V^\ast_\psi F(t), \varphi(t) \rangle =  \int \int_{\R^{2d}} F(x,\xi) V_{\overline{\psi},t}( e^{-\eta \cdot t} \varphi(t))(x, -\xi) \,\dx\,\dxi, \qquad \varphi \in \mathcal{S}(\R^d).
\]
Next, we show that $V^\ast_\psi$ is continuous. Let $K \in \CCS(\Gamma)$ and $B \in \mathfrak{B}(\mathcal{S}(\R^d))$ be arbitrary. Choose $\varepsilon > 0$ so small that $K_\varepsilon \in \CCS(\Gamma)$. Lemma \ref{lemma-2} implies that 
there is $v \in \overline{V}(\Vpol)$ such that 
\[
\abso{V_{\overline{\psi}}(e^{-\eta \cdot t} \varphi(t))(x, -\xi)} \leq e^{h_{-K}(x)} v(\xi), \qquad (x,\xi) \in \R^{2d},
\]
 for all $\eta \in K$ and $\varphi \in B$. Set $w(\xi) = v(\xi) (1+ \abso{\xi})^{d+1} \in \overline{V}(\Vpol)$. Example \ref{example} implies that, for all $F \in C_\Gamma(\R_x^d) \widehat{\otimes}_\varepsilon \Vpol C(\R_\xi^d)$,
\begin{align*}
p_{K,B}(V^\ast_\psi F) &\leq \sup_{\eta \in K} \sup_{\varphi \in B} \int \int_{\R^{2d}}  \abso{F(x,\xi)} \abso{V_{\overline{\psi},t}( e^{-\eta \cdot t} \varphi(t))(x, -\xi)} \,\dx\,\dxi\\
&\leq \int \int_{\R^{2d}} \abso{F(x,\xi)} e^{h_{-K}(x)} v(\xi)  \,\dx\,\dxi \leq C \norm{ F }_{K_\varepsilon, w},
\end{align*}
where
\[
C = \int_{\R^d} e^{-\varepsilon\abso{x}} dx \int_{\R^d} \frac{1}{(1+\abso{\xi})^{d+1}} \dxi. \qedhere
\]
\end{proof}
We now combine Theorem \ref{STFT-char-1} with the results from Section \ref{prev-paper} to study the space $\mathcal{S}'(\Gamma)$. Let $\emptyset \neq \Gamma \subseteq \R^d$ be open and convex and let $(K_N)_{N \in \N} \subset \CCS(\Gamma)$ be such that $K_N \subseteq K_{N+1}$ for all $N \in \N$ and $\Gamma = \bigcup_{N} K_N$. For $\mathcal{W} = (e^{h_{-K_N}})_{N \in \N}$ we set $\mathcal{B}'_\Gamma(\R^d) \coleq \mathcal{B}'_{\mathcal{W}}(\R^d)$ and $\mathcal{O}'_{C, \Gamma}(\R^d) = \mathcal{O}'_{C, \mathcal{W}}(\R^d)$. Clearly, these definitions are independent of the chosen sequence $(K_N)_{N \in \N}$. We are ready to state and prove our main theorem.
\begin{theorem}
Let $\emptyset \neq \Gamma \subseteq \R^d$ be open and convex. Then, $\mathcal{S}'(\Gamma) = \mathcal{B}'_\Gamma(\R^d) = \mathcal{O}'_{C, \Gamma}(\R^d)$ as locally convex spaces and $\mathcal{S}'(\Gamma)$ is an ultrabornological (PLS)-space.
\end{theorem}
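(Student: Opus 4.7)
The strategy is to combine Proposition~\ref{STFT-char} with Proposition~\ref{STFT-char-1} and Theorem~\ref{top-char}, glued together by the reconstruction formula \eqref{reconstruction-D-dual}. The crux is to verify that the weight system $\mathcal{W} = (e^{h_{-K_N}})_{N \in \N}$ associated with $\Gamma$ satisfies \eqref{V}, \eqref{L1-cond}, \eqref{trans-inv} (so that Proposition~\ref{STFT-char-1} and Theorem~\ref{top-char} apply at all) together with \eqref{Omega-switched} (so that the equivalent condition $(ii)$ of Theorem~\ref{top-char} is triggered). Once all four hypotheses are in hand, the theorem essentially assembles itself.

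Since $C_\Gamma(\R^d)$, $\mathcal{B}'_\Gamma(\R^d)$ and $\mathcal{O}'_{C,\Gamma}(\R^d)$ are independent of the chosen exhaustion, I would pick $(K_N)$ so that $K_N + \overline{B}(0, r_N) \subseteq K_{N+1}$ for some $r_N > 0$; this is possible because each $K_N$ is compact in the open set $\Gamma$. By Example~\ref{example} and Lemma~\ref{supporting-function}, such an inclusion translates into the pointwise estimate $h_{-K_{N+1}}(x) \geq h_{-K_N}(x) + r_N \abso{x}$, which immediately yields \eqref{V} and \eqref{L1-cond} (via $e^{-r_N \abso{x}} \in L^1(\R^d)$). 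Condition \eqref{trans-inv} follows from subadditivity of $h_{-K_N}$ with $M_1 = M_2 = N$ and $C = 1$. For \eqref{Omega-switched}, positive homogeneity and Lemma~\ref{supporting-function}(b) recast the required estimate as the set inclusion $(1-\theta) K_N + \theta K_P \subseteq K_M$. Given $M \geq N$ with $K_N + \overline{B}(0,r) \subseteq K_M$, writing $(1-\theta)a + \theta b = a + \theta(b-a)$ for $a \in K_N, b \in K_P$ shows that any $\theta = r/\operatorname{diam}(K_P) \in (0,1/2]$ does the job with $C=1$, because $a \in K_N \subseteq K_P$ forces $\abso{b-a} \leq \operatorname{diam}(K_P)$.

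With $\mathcal{W}$ verified, Theorem~\ref{top-char} yields $\mathcal{B}'_\Gamma(\R^d) = \mathcal{O}'_{C, \Gamma}(\R^d)$ as locally convex spaces and identifies this common space as an ultrabornological (PLS)-space. It only remains to identify $\mathcal{S}'(\Gamma)$ with $\mathcal{O}'_{C,\Gamma}(\R^d)$ as locally convex spaces. Fix $\psi, \gamma \in \mathcal{D}(\R^d)$ with $c \coleq (\gamma, \psi)_{L^2} \neq 0$. Propositions~\ref{STFT-char} and \ref{STFT-char-1} provide continuous mappings $V_\psi$ and $c^{-1} V^\ast_\gamma$ between $C_\Gamma(\R^d_x) \widehat{\otimes}_\varepsilon \Vpol C(\R^d_\xi)$ and each of $\mathcal{S}'(\Gamma)$ and $\mathcal{O}'_{C,\Gamma}(\R^d)$. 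For any $f$ in either space, the reconstruction formula \eqref{reconstruction-D-dual}, applied in $\mathcal{D}'(\R^d)$, reads $f = c^{-1} V^\ast_\gamma V_\psi f$; hence each space is carried into the other by the composition $c^{-1} V^\ast_\gamma \circ V_\psi$, simultaneously identifying the underlying sets and exhibiting the identity map as bicontinuous.

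The one step with genuine content is \eqref{Omega-switched}, which is the analogue of Vogt's condition $(\Omega)$: it rules out the generic structural obstruction to ultrabornologicity in the (PLS)-class, and it holds here only because the Minkowski geometry of a convex open $\Gamma$ allows the interpolating combination $(1-\theta)K_N + \theta K_P$ to be pushed back into $K_M$ by shrinking $\theta$ with $\operatorname{diam}(K_P)$. The remaining weight conditions are routine calculations with supporting functions, and the final identification $\mathcal{S}'(\Gamma) = \mathcal{O}'_{C,\Gamma}(\R^d)$ is a purely formal consequence of the STFT mapping properties together with the reconstruction formula.
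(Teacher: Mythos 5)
Your proposal is correct and follows essentially the same route as the paper: verify that $\mathcal{W} = (e^{h_{-K_N}})_{N\in\N}$ satisfies \eqref{V}, \eqref{L1-cond}, \eqref{trans-inv} and \eqref{Omega-switched}, deduce $\mathcal{S}'(\Gamma)=\mathcal{O}'_{C,\Gamma}(\R^d)$ from Propositions \ref{STFT-char-1} and \ref{STFT-char} via the reconstruction formula \eqref{reconstruction-D-dual}, and then invoke Theorem \ref{top-char}. The only difference is one of detail: the paper dispatches the weight conditions with a bare reference to Lemma \ref{supporting-function} and Example \ref{example}, whereas you carry out the computation, including the correct reduction of \eqref{Omega-switched} to the inclusion $(1-\theta)K_N+\theta K_P\subseteq K_M$.
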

\begin{proof}
Let $(K_N)_{N \in \N} \subset \CCS(\Gamma)$ be such that $K_N \subseteq K_{N+1}$ for all $N \in \N$ and $\Gamma = \bigcup_{N} K_N$. Set $\mathcal{W} = (e^{h_{-K_N}})_{N \in \N}$. Lemma \ref{supporting-function} and Example \ref{example} imply that $\mathcal{W}$ satisfies  \eqref{V}, \eqref{L1-cond} and \eqref{trans-inv}. Hence, in view of the reconstruction formula \eqref{reconstruction-D-dual}, the topological identity $\mathcal{S}'(\Gamma) = \mathcal{O}'_{C, \Gamma}(\R^d)$ follows from Proposition \ref{STFT-char-1} and Proposition \ref{STFT-char}. Since $\mathcal{W}$ also satisfies \eqref{Omega-switched} (again by Lemma \ref{supporting-function} and Example \ref{example}), the other statements are a direct consequence of Theorem \ref{top-char}. \end{proof}

{\bf Acknowledgements.} We thank N.~Ortner for suggesting the topic of this paper.
E.~A.~Nigsch acknowledges support by the Austrian Science Fund (FWF) grants P26859 and P30233. A.~Debrouwere was supported by  FWO-Vlaanderen through the postdoctoral grant 12T0519N.

\printbibliography

\end{document}